%% Advances in Combinatorics article template
%%
%% aic-template.tex v0.33
%%
%% AUTHOR: Fill in fields (or see warnings) below marked with "AUTHOR"
%% ** Add as few macro / package definitions as possible
%% ** Compile with "pdflatex"; make sure that
%%           aic.cls and tocbase.cls are in the same directory.
%%
%% EDITOR: Fill in fields below marked with "EDITOR"
%%    and check that authors proprely filled in field marked with "AUTHOR"

\documentclass{aic}

\usepackage{amsfonts,amsmath,latexsym,color,epsfig,hyperref}

\newtheorem{theorem}{Theorem}
\newtheorem{lemma}{Lemma}

%%%%%%%%%%%%%%%%%%%%%%%%%%%%%%%%%%%%%%%%%%%%%%%%
%% AUTHOR: Fill in meta-data below:
\aicAUTHORdetails{%
  title = {The Ramsey number of books}, %% please capitalize all significant words
  author = {David Conlon},
    %% Please use the format for commas as follows:
    %% "A", or "A and B", or "A, B, and C", or "A, B, C, and D", etc.
  plaintextauthor = {David Conlon},
    %% An author list in plain text: Use the format
    %% "A", or "A, B", or "A, B, C", etc.
    %% NOTE: No LaTeX code in author names.
    %% NOTE: No "and" at the end--simply comma separated,
    % 
 %% The remaining lines in this section are optional:
    %
    %% IF YOUR TITLE CONTAINS MATH OR LATEX such as accented characters: 
    %% Add a "plain text title";  otherwise comment out the next line:
  plaintexttitle = {The Ramsey number of books}, %%  title without math or LaTeX
    %
    %% ONLY IF YOUR TITLE IS TOO LONG to fit in the page headers, please 
    %% add an abbreviated version of the title, otherwise comment it out:
  runningtitle = {The Ramsey number of books}, 
    %
    %% ONLY IF YOUR AUTHOR LIST IS TOO LONG to fit in the page headers, 
    %% add an abbreviated version, otherwise comment it out:
  runningauthor = {David Conlon},
    %% you can replace first names and/or middle names with initials.
    %
    %% ONLY IF YOUR AUTHOR LIST IS TOO LONG to fit the copyright entry
    %% on the bottom of the front page,
    %% add an abbreviated version, otherwise comment it out:
  copyrightauthor = {D. Conlon},
    %% Note that the copyrightauthor  field will seldom be necessary;
    %% for instance, in this example with four authors, it would be 
    %% all right to comment it out and have all authors' full names 
    %% appear on the Copyright line
   %
   %% Include keywords of your choice: comma separated, lower case;
   %% comment out the "keywords" line if you don't wish to provide them
  keywords = {Ramsey numbers, regularity method, Ramsey multiplicity},
}   %%% END \aicAUTHORdetails

%%%%%%%%%%%%%%%%%%%%%%%%%%%%%%%%%%%%%%%%%%%%%%%%
%%% EDITOR: please fill in the following data:
\aicEDITORdetails{%
   year={2019},
   %volume={XX},
   number={3},
   received={9 August 2018},   % received date: example: 7 January 2017
   %revised={XX Month 20XX},    % Optional revised date (you may comment it out)
   published={30 October 2019},  % published date
   doi={10.19086/aic.10808},      % XXX = number of paper, e.g. aic006 for paper#6
%                              % or  aic0006 (length of string arbitrary)
}   %%% END \aicEDITORdetails

\begin{document}

\begin{frontmatter}[classification=text]
%% EDITOR: this will force the keywords to appear right after the Abstract.
%%   If the abstract is too long and would force the keywords off the
%%   front page, please comment out % [classification=text] above
%%   This way the keywords will be floated on the bottom of the first page
%%   even though the Abstract spills over to the next page.

%%% AUTHOR: Title goes here.  This line is optional.  You must use it
%%   if title has footnote attached or requires nontrivial typesetting,
%%   e.g., inclusion of linebreaks to force nice layout.
\title{The Ramsey number of books} %% please capitalize all significant words

%%% AUTHOR:
%%% List all authors. If you wish, place grant acknowledgements in \thanks.
%%% In brackets include a short tag for each author.
\author[dcon]{David Conlon\thanks{Supported by a Royal Society University Research Fellowship and ERC Starting Grant 676632.}}

%%% AUTHOR: Abstract goes here
\begin{abstract}
We show that in every two-colouring of the edges of the complete graph $K_N$ there is a monochromatic $K_k$ which can be extended in at least $(1 + o_k(1))2^{-k}N$ ways to a monochromatic $K_{k+1}$. This result is asymptotically best possible, as may be seen by considering a random colouring. Equivalently, defining the book $B_n^{(k)}$ to be the graph consisting of $n$ copies of $K_{k+1}$ all sharing a common $K_k$, we show that the Ramsey number $r(B_n^{(k)}) = 2^k n + o_k(n)$. In this form, our result answers a question of Erd\H{o}s, Faudree, Rousseau and Schelp and establishes an asymptotic version of a conjecture of Thomason.
\end{abstract}
\end{frontmatter}

%%% AUTHOR: body of paper starts here
\section{Introduction}

The Ramsey number $r(H)$ of a graph $H$ is the smallest natural number $N$ such that every two-colouring of the edges of the complete graph $K_N$ contains a monochromatic copy of $H$. The problem of determining Ramsey numbers is notoriously hard. For instance, when $H$ is a complete graph, work of Erd\H{o}s and Szekeres~\cite{E47, ESz35} in the 1930s and 40s showed that $\sqrt{2}^t \leq r(K_t) \leq 4^t$, but the only improvements to these bounds since that time~\cite{C09, S75} have been to lower order terms.

We investigate the Ramsey numbers of books, a study which bears close relation to the problem of determining $r(K_t)$. The book $B_n^{(k)}$ is the graph consisting of $n$ copies of $K_{k+1}$, all sharing a common $K_k$. Embracing the metaphor, we refer to the common $K_k$ as the spine of the book and the $n$ points completing each copy of $K_{k+1}$ as the pages or leaves.

The Ramsey problem for these books was first studied by Erd\H{o}s, Faudree, Rousseau and Schelp~\cite{EFRS78} and then by Thomason~\cite{T82}. Both papers contain bounds of the form
\[2^k n + o_k(n) \leq r(B_n^{(k)}) \leq 4^k n,\]
where the lower bound follows from considering the random graph $G(n,1/2)$ and the upper bound from a standard neighbourhood chasing argument. 
In their paper, Erd\H{o}s et al.~asked whether one of these bounds might be asymptotically correct and Thomason conjectured that the lower bound is. In fact, he made a very precise conjecture about the value of $r(B_n^{(k)})$, namely, that
\[r(B_n^{(k)}) \leq 2^k(n + k - 2) + 2.\]
For $k = 2$, this conjecture is known to hold~\cite{RS78} and is tight for infinitely many values of $n$. The main contribution of this paper is a proof of an approximate version of Thomason's conjecture, thus answering the question of Erd\H{o}s et al.~(see also~\cite{NRS05}). 

\begin{theorem} \label{thm:main}
For every natural number $k$,
\[r(B_n^{(k)}) = 2^k n + o_k(n).\]
\end{theorem}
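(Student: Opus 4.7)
I would work with the equivalent statement from the abstract: in every two-colouring of $E(K_N)$ some monochromatic $K_k$ has at least $(1+o_k(1))2^{-k}N$ same-coloured extensions to a $K_{k+1}$. The lower bound $2^k n$ is witnessed by a random colouring, so the goal is the matching upper bound, and the only sensible target constant per greedy step is $1/2$ exactly.

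My strategy is to combine Szemer\'edi's regularity lemma with a $k$-step iterative selection. First I apply regularity to the red colour class to obtain a partition $V_1,\ldots,V_t$ in which all but negligibly many pairs are $\epsilon$-regular; I discard vertices in too many irregular pairs. Writing $d_{ij}\in[0,1]$ for the red density on the regular pair $(V_i,V_j)$, I define a reduced two-colouring of $K_t$ by colouring $\{i,j\}$ red if $d_{ij}\ge 1/2$ and blue otherwise. The plan is then to build the spine of a monochromatic book one vertex at a time. At step $j$ I maintain a partial spine $v_{i_1},\ldots,v_{i_{j-1}}$ mutually joined in a fixed colour $c$, and a surviving set $S_{j-1}\subseteq V(K_N)$ of vertices joined in colour $c$ to each previously chosen representative. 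Using the counting/regularity property, for almost every choice of a new part $V_{i_j}$ forming a $c$-coloured reduced edge with all previous spine parts and almost every representative $v_{i_j}\in V_{i_j}$, the updated surviving set satisfies $|S_j|\ge(1-o(1))\tfrac12|S_{j-1}|$. Iterating for $k$ steps gives $|S_k|\ge(1-o_k(1))2^{-k}N$, which is at least $n$ once $N\ge(2^k+\epsilon)n$, and hence a monochromatic $B_n^{(k)}$.

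The main obstacle, and what forces the use of more than crude neighbourhood chasing, is guaranteeing that the next part $V_{i_j}$ exists at every stage with the colour $c$ held \emph{fixed}. Finding a single monochromatic $K_k$ in the reduced graph (via the Erd\H os--Szekeres bound) is nowhere near enough: a naive greedy selection which takes the larger side at each step would refresh the colour at each round and destroy monochromaticity of the spine, costing the factor $2^{k-1}$ that is precisely what the book's leading constant forbids. To get around this I would use a supersaturation/Ramsey-multiplicity argument on the reduced two-colouring: show that many monochromatic $K_k$'s exist in the reduced graph and that, by averaging over them, some such $K_k$ admits the required sequence of high-density extensions at every partial stage of the greedy process. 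Translating that back through the counting lemma for regular tuples yields the asymptotic $(1-o_k(1))2^{-k}N$ and thus Theorem~\ref{thm:main}; the delicate point is the simultaneous control of colour-consistency and the exact density factor $1/2$, since any slack there would multiply to a lost factor of at least $(1+\delta)^k$ in the conclusion.
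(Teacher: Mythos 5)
Your proposal has a genuine gap at its central step, and the gap sits exactly where the whole difficulty of the theorem lies. The claim that at each greedy stage ``the updated surviving set satisfies $|S_j|\ge(1-o(1))\tfrac12|S_{j-1}|$'' is unjustified and false in general: a reduced colouring with threshold $1/2$ only controls the densities \emph{between the spine parts}, whereas the size of the surviving set is governed by the red (or blue) densities from the new spine part $V_{i_j}$ to \emph{all} the parts meeting $S_{j-1}$, and these can all lie below $1/2$. A step-by-step greedy argument in which each step is guaranteed only a majority-colour density of $1/2$ is exactly what produces the classical $4^k n$ upper bound, and no choice of the monochromatic reduced $K_k$ rescues the iteration as you have set it up. Your proposed repair --- supersaturation of monochromatic $K_k$'s in the reduced graph plus averaging --- is stated as a hope rather than an argument, and it cannot work in the naive form: by Thomason's disproof of Erd\H{o}s's multiplicity conjecture, for $k\ge4$ one cannot even guarantee the ``random'' number $2^{1-\binom{k}{2}}\binom{m}{k}$ of monochromatic $K_k$'s in the reduced graph, and in any case the number of monochromatic $K_k$'s says nothing about the densities from their parts to the rest of the vertex set, which is the quantity that actually needs to average to at least $2^{-k}$.

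The paper's proof is structured quite differently precisely to get around this. It (i) strengthens the regularity decomposition (Lemma~\ref{lem:main}) to produce subsets $W_i\subseteq V_i$ that are $\eta$-regular with themselves, so that an entire spine can be placed inside a single part whose internal majority colour is known; (ii) sets the red threshold in the reduced graph at $1-\delta$ rather than $1/2$, so that a red reduced degree of $2^{-k}m$ already yields $((1-\delta-\eta)^k-\delta)2^{-k}N\ge n$ pages; (iii) in the complementary case invokes a coloured Erd\H{o}s--Stone theorem (Lemma~\ref{lem:ES2}) to find a blue blow-up of $K_k$ whose $k$ parts are themselves monochromatic (necessarily red) cliques of constant size $t$; and (iv) closes with two optimisation lemmas, most importantly the dichotomy $\tfrac1k\sum_{i=1}^k(t-x_i)^k+\prod_{i=1}^k x_i\ge2(t/2)^k$ of Lemma~\ref{lem:dichotomy}, which guarantees that either some red clique supports a red spine with $2^{-k}N$ red extensions or some transversal of the $k$ cliques supports a blue spine with $2^{-k}N$ blue extensions. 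Steps (iii) and (iv) are the averaging argument you were reaching for, but they average over a very specific structured configuration rather than over all monochromatic $K_k$'s, and they need the convexity estimates of Lemmas~\ref{lem:dichotomy} and~\ref{lem:degprod} to extract the exact constant $2^{-k}$. Without something playing the role of these ingredients, your outline does not close.
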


To see something of why this is interesting, suppose that we have a red/blue-colouring of $K_N$ with no monochromatic copy of $K_t$. Then this colouring contains no monochromatic book $B_n^{(k)}$ with $n$ equal to the off-diagonal Ramsey number $r(K_{t -k}, K_t)$. Indeed, suppose that the book is red. If the set induced by the pages contains a blue $K_t$, we have a contradiction, so it must contain a red $K_{t-k}$. But together with the red spine $K_k$, this forms a red $K_t$. Hence, by Theorem~\ref{thm:main}, if $t$, and therefore $n$, is sufficiently large in terms of $k$, we have
\[r(K_t) \leq r(B_n^{(k)}) \leq 2^{k+1} r(K_{t-k}, K_t) \leq 2^{k+1} \binom{2t - k}{t-k},\]
where the last inequality follows from a classical estimate of Erd\H{o}s and Szekeres~\cite{ESz35}.
In particular, if the theorem applied for $t$ linear in $k$, this would give an exponential improvement on the upper bound for diagonal Ramsey numbers. Unfortunately, our Theorem~\ref{thm:main} is very far from achieving this goal, since in order to obtain an error term of the form $\epsilon n$ we require $n$ to be at least a tower of twos whose height is a function of $k$ and $1/\epsilon$.

Another motivation for Theorem~\ref{thm:main} is its relation to a well-known, but false, conjecture of Erd\H{o}s~\cite{E62} (see also~\cite{BR80}) asserting that every two-colouring of the edges of $K_N$ contains at least 
$$(1 + o_k(1)) 2^{1-\binom{k}{2}} \binom{N}{k}$$ 
monochromatic copies of $K_k$. That is, he conjectured that a random colouring should asymptotically minimise the number of monochromatic copies of $K_k$. While true for $k = 3$ by a result of Goodman~\cite{G59}, this conjecture was disproved for $k \geq 4$ by Thomason~\cite{T89}. However, Theorem~\ref{thm:main} is equivalent to a local version of Erd\H{o}s' conjecture, saying that there is some monochromatic copy of $K_{k-1}$ which is contained in asymptotically as many monochromatic $K_k$ as in a random colouring. In some ways, this interpretation is more appealing than the original formulation in terms of books. It also connects our result with the study of Ramsey multiplicity, which has drawn considerable attention in recent years (see, for instance,~\cite{C12, CKPSTY13, F08, HHKNR12, JST96}).

\section{Preliminaries}

In this section, we collect several results that we will need for the proof of Theorem~\ref{thm:main}.

\subsection{Regularity and counting lemmas}

One of the main ingredients in our proof is a simple corollary of Szemer\'edi's regularity lemma. To state this fundamental result, we first recall some standard definitions. Given two vertex sets $U$ and $V$ in a graph, the density $d(U,V)$ between them is given by $d(U,V) = e(U,V)/|U||V|$. A bipartite graph between two vertex sets $U$ and $V$ is said to be $\epsilon$-regular if, for all sets $U' \subseteq U$, $V' \subseteq V$ with $|U'| \geq \epsilon |U|$ and $|V'| \geq \epsilon |V|$, $|d(U', V') - d(U,V)| \leq \epsilon$. A partition $V(G) = \cup_{i=1}^m V_i$ of the vertex set of a graph $G$ is said to be equitable if $||V_i| - |V_j|| \leq 1$ for all $i$ and $j$. The regularity lemma is now as follows.

\begin{lemma} \label{lem:reg}
For every $0 < \epsilon < 1$ and natural number $m_0$, there exists a natural number $M$ such that every graph $G$ with at least $m_0$ vertices has an equitable partition $V(G) = \cup_{i=1}^m V_i$ with $m_0 \leq m \leq M$ parts such that all but $\epsilon m^2$ pairs $(V_i, V_j)$ with $1 \leq i \neq j \leq m$ are $\epsilon$-regular.
\end{lemma}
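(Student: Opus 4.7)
The plan is to prove Lemma~\ref{lem:reg} via Szemer\'edi's classical energy increment argument. The central quantity is the \emph{mean-square density} (or index) of a partition $\mathcal{P} = \{V_1,\dots,V_m\}$ of $V(G)$, defined by
\[q(\mathcal{P}) = \sum_{i,j=1}^{m} \frac{|V_i|\,|V_j|}{n^2}\, d(V_i, V_j)^2,\]
where $n = |V(G)|$. A short convexity (Cauchy--Schwarz) argument shows that $q$ is monotone under refinement and always lies in the interval $[0,1]$.

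The crucial step is a defect Cauchy--Schwarz estimate: if $(V_i, V_j)$ is not $\epsilon$-regular, with witnesses $U \subseteq V_i$ and $W \subseteq V_j$, then splitting $V_i$ along $\{U, V_i \setminus U\}$ and $V_j$ along $\{W, V_j \setminus W\}$ increases the contribution of this pair to the index by at least $\epsilon^4\, |V_i|\,|V_j|/n^2$. Summing over all $\epsilon m^2$ bad pairs, any partition failing the conclusion of the lemma admits a common refinement $\mathcal{P}^*$ with $q(\mathcal{P}^*) \geq q(\mathcal{P}) + \epsilon^5$. Starting from any equitable partition with at least $m_0$ parts and iterating, the process must terminate after at most $\epsilon^{-5}$ steps, which forces $M$ to be bounded by a tower of twos of height $O(\epsilon^{-5})$.

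The main obstacle is preserving equitability throughout the iteration, since the natural refinement step produces cells of wildly varying sizes. To address this, after each refinement one further subdivides every cell into pieces of a common size $\lfloor n/K \rfloor$ and collects the leftover vertices into an exceptional class of total size at most $\epsilon n$. One must then verify that the resulting index loss from relocating the exceptional vertices is dominated by the $\epsilon^5$ gain per iteration, so that the process still terminates within the claimed bound $M$. This bookkeeping is the only genuinely delicate point; the remainder of the argument is just convexity and elementary linear algebra.
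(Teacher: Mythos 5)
The paper does not prove Lemma~\ref{lem:reg} at all: it is stated as Szemer\'edi's regularity lemma and used as a black box (the only nonstandard feature being the cosmetic normalisation ``all but $\epsilon m^2$ ordered pairs'' in place of the usual $\epsilon\binom{m}{2}$, and the requirement $m\geq m_0$, which one meets by starting the iteration from an arbitrary equitable partition into $m_0$ parts). Your sketch is the classical energy-increment proof and its ingredients are all correct: the index is bounded in $[0,1]$ and monotone under refinement, a non-$\epsilon$-regular pair yields a defect Cauchy--Schwarz gain of $\epsilon^4|V_i||V_j|/n^2$ from refining by the witness sets, more than $\epsilon m^2$ bad pairs in an equitable partition therefore force a gain of order $\epsilon^5$ in the common refinement, and the iteration halts within $O(\epsilon^{-5})$ steps, giving the tower-type bound on $M$. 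The one point where your write-up does not quite match the statement being proved is the final form of the partition: the lemma here demands a genuinely equitable partition with no exceptional class, whereas your re-equitization step parks up to $\epsilon n$ leftover vertices in an exceptional set. To finish you must, at the last stage, distribute those vertices among the parts (or absorb them as in the standard ``equitable version'' of the lemma), checking that this perturbs each pair density by $o(\epsilon)$ and so destroys the $\epsilon$-regularity of only a negligible fraction of pairs after rescaling $\epsilon$. This is routine bookkeeping, as you say, but it is the step that converts the exceptional-class formulation into the one actually asserted, so it should be carried out rather than only flagged.
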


We will also need the following lemma from~\cite{CF12}. We say that a subset $U$ of the vertex set of a graph $G$ is $\epsilon$-regular if the pair $(U, U)$ is $\epsilon$-regular.

\begin{lemma} \label{lem:regpiece}
For every $0 < \epsilon < 1$, there exists a constant $\delta$ such that every graph $G$ contains an $\epsilon$-regular vertex subset $U$ with $|U| \geq \delta |V(G)|$. 
\end{lemma}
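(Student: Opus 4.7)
My plan is to apply Szemer\'edi's regularity lemma (Lemma~\ref{lem:reg}) to $G$ and then assemble $U$ as a union of many parts whose pairwise densities lie in a narrow window. Concretely, I would invoke Lemma~\ref{lem:reg} with some sufficiently small regularity parameter $\epsilon'=\epsilon'(\epsilon)\ll\epsilon$ and a sufficiently large $m_0=m_0(\epsilon)$, producing an equitable partition $V(G)=V_1\cup\cdots\cup V_m$ in which all but an $\epsilon'$-fraction of the pairs $(V_i,V_j)$ are $\epsilon'$-regular.

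I would then colour each $\epsilon'$-regular pair $(V_i,V_j)$ by the interval of length $\eta=\epsilon/10$ containing its density $d(V_i,V_j)$, giving an edge-colouring in $O(1/\epsilon)$ colours of a graph that differs from $K_m$ by only $\epsilon' m^2$ edges. Since this ``good'' subgraph is nearly complete, a combination of Tur\'an's theorem and Ramsey's theorem yields, for $m_0$ large enough, a monochromatic clique of any prescribed size $t=t(\epsilon)$; I would take $t$ large enough that $1/(\epsilon^2 t)\le\epsilon/4$. The resulting indices $i_1,\ldots,i_t$ then satisfy that every pair $(V_{i_a},V_{i_b})$ is $\epsilon'$-regular and $d(V_{i_a},V_{i_b})\in[d,d+\eta]$ for some common $d$.

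Setting $U=V_{i_1}\cup\cdots\cup V_{i_t}$, we have $|U|\ge\delta|V(G)|$ for some $\delta=\delta(\epsilon)>0$. To verify $(U,U)$ is $\epsilon$-regular, take $U',U''\subseteq U$ with $|U'|,|U''|\ge\epsilon|U|$ and write $A_a=V_{i_a}\cap U'$, $B_b=V_{i_b}\cap U''$, so that $d(U',U'')=\sum_{a,b}d(A_a,B_b)\,p_{ab}$ with $p_{ab}=|A_a||B_b|/(|U'||U''|)$. The diagonal weight $\sum_a p_{aa}$ is at most $1/(\epsilon^2 t)$, since each $V_{i_a}$ has size $|U|/t$; the terms with $|A_a|<\epsilon'|V_{i_a}|$ or $|B_b|<\epsilon'|V_{i_b}|$ have total weight at most $2\epsilon'/\epsilon$; and the remaining off-diagonal terms are controlled by $\epsilon'$-regularity, so $|d(A_a,B_b)-d(V_{i_a},V_{i_b})|\le\epsilon'$ and hence $d(A_a,B_b)\in[d-\epsilon',\,d+\eta+\epsilon']$. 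Combining these estimates yields $|d(U',U'')-d|\le\epsilon/2$, and the same computation applied to $U'=U''=U$ gives $|d(U,U)-d|\le\epsilon/2$, so $(U,U)$ is $\epsilon$-regular.

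The main obstacle is arranging the parameters so that the Ramsey step produces a clique of size $t\gg 1/\epsilon^2$ in an $O(1/\epsilon)$-coloured near-complete graph on $m$ vertices; this forces $m$, and hence $1/\delta$, to be very large in terms of $1/\epsilon$, but that is acceptable here since we only require the existence of \emph{some} positive constant $\delta=\delta(\epsilon)$.
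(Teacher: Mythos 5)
Your argument is correct, but it takes a different route from the paper, which does not prove Lemma~\ref{lem:regpiece} at all: it simply quotes it from~\cite{CF12}, where the point is precisely to obtain such a regular subset \emph{without} invoking the full regularity lemma, via a bounded-length energy-increment argument that yields a far better (non-tower-type) bound on $\delta$. Your derivation instead bootstraps from Lemma~\ref{lem:reg}: apply it with parameters $\epsilon'\ll\epsilon$ and $m_0$ large, colour the $\epsilon'$-regular pairs by density intervals of width $\epsilon/10$, extract via Tur\'an plus multicolour Ramsey a set of $t\gg\epsilon^{-3}$ parts that are pairwise $\epsilon'$-regular with densities in a common short interval, and take $U$ to be their union; the verification that $(U,U)$ is $\epsilon$-regular correctly isolates the three error sources (diagonal weight at most $1/(\epsilon^2 t)$, small intersections contributing weight at most $2\epsilon'/\epsilon$, and the remaining terms pinned within $\epsilon'$ of the common density window), and the quantifier order is sound since $t$, the number of colours, and hence the Ramsey number all depend only on $\epsilon$, after which $\epsilon'$ and $m_0$ may be chosen. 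What you lose relative to~\cite{CF12} is quantitative: your $1/\delta$ inherits the tower-type growth of $M$ from Lemma~\ref{lem:reg}. What you gain is that the lemma becomes self-contained within the paper's toolkit, and since the proof of Theorem~\ref{thm:main} already uses Lemma~\ref{lem:reg} (indeed with $\epsilon=\eta\cdot\delta(\eta)$, so the composed bounds are tower-type regardless), nothing is lost for the application here.
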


The key lemma we will need is the following. We note a superficial similarity to a lemma used in the proof of the induced removal lemma~\cite{AFKS00}, though that lemma requires the stronger condition that every pair $(W_i, W_j)$ be regular.

\begin{lemma} \label{lem:main}
For every $0 < \eta < 1$ and natural number $m_0$, there exists a natural number $M$ such that every graph $G$ with at least $m_0$ vertices has an equitable partition $V(G) = \cup_{i=1}^m V_i$ with $m_0 \leq m \leq M$ parts and subsets $W_i \subseteq V_i$ such that $W_i$ is $\eta$-regular for all $i$ and, for all but $\eta m^2$ pairs $(i, j)$ with $1 \leq i \neq j \leq m$, $(V_i, V_j)$, $(W_i, V_j)$ and $(W_i, W_j)$ are $\eta$-regular with $|d(W_i, V_j) - d(V_i, V_j)| \leq \eta$ and $|d(W_i, W_j) - d(V_i, V_j)| \leq \eta$.
\end{lemma}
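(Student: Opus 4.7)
The plan is to derive Lemma~\ref{lem:main} as a routine combination of Lemma~\ref{lem:reg} and Lemma~\ref{lem:regpiece} together with the standard \emph{slicing lemma}, which asserts that if a pair $(A,B)$ is $\epsilon$-regular of density $d$ and $A' \subseteq A$, $B' \subseteq B$ satisfy $|A'| \geq \alpha |A|$, $|B'| \geq \beta |B|$ with $\alpha, \beta \geq \epsilon$, then $(A', B')$ is $\epsilon / \min(\alpha,\beta)$-regular of density within $\epsilon$ of $d$.

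First I would let $\delta = \delta(\eta)$ be the constant provided by Lemma~\ref{lem:regpiece} for parameter $\eta$, and then choose $\epsilon$ sufficiently small as a function of $\eta$ and $\delta$ (taking, say, $\epsilon \leq \eta \delta^2 / 2$ suffices) so that the slicing lemma converts $\epsilon$-regularity into $\eta$-regularity when one or both sides of a pair are cut down by a factor of $\delta$. I would then apply Lemma~\ref{lem:reg} with parameters $\epsilon$ and $m_0$ to obtain an equitable partition $V(G) = V_1 \cup \cdots \cup V_m$ with $m_0 \leq m \leq M$, in which at most $\epsilon m^2 \leq \eta m^2$ pairs $(V_i, V_j)$ fail to be $\epsilon$-regular.

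Next, for each $i$, I would apply Lemma~\ref{lem:regpiece} with parameter $\eta$ to the induced subgraph $G[V_i]$ to extract an $\eta$-regular subset $W_i \subseteq V_i$ with $|W_i| \geq \delta |V_i|$, which immediately gives the requirement that each $W_i$ be $\eta$-regular. For any pair $(V_i, V_j)$ that was $\epsilon$-regular in the partition, the slicing lemma applied in turn with $(A', B') = (W_i, V_j)$ (where $\alpha = \delta$, $\beta = 1$) and with $(A', B') = (W_i, W_j)$ (where $\alpha = \beta = \delta$) yields that both of these pairs are $\eta$-regular with densities differing from $d(V_i, V_j)$ by at most $\epsilon \leq \eta$. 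Since the exceptional set of pairs from Lemma~\ref{lem:reg} already has size at most $\eta m^2$, the conclusion of the lemma follows.

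The only piece of genuine content is the parameter calibration in the slicing step, where $\epsilon$ must be chosen small enough in terms of both $\eta$ and the output $\delta$ of Lemma~\ref{lem:regpiece}; everything else is a direct appeal to the two named lemmas, so I do not anticipate any serious obstacle and the verification should be short.
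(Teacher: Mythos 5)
Your proposal is correct and follows essentially the same route as the paper: apply Lemma~\ref{lem:reg} with a parameter $\epsilon$ chosen small in terms of $\eta$ and the constant $\delta(\eta)$ from Lemma~\ref{lem:regpiece}, extract an $\eta$-regular $W_i \subseteq V_i$ of proportional size from each part, and use the slicing argument (which the paper leaves implicit, taking $\epsilon = \eta\,\delta(\eta)$) to transfer regularity and density from $(V_i,V_j)$ to $(W_i,V_j)$ and $(W_i,W_j)$. Your more conservative choice $\epsilon \leq \eta\delta^2/2$ works just as well.
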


\begin{proof}
Apply the regularity lemma, Lemma~\ref{lem:reg}, to $G$ with $\epsilon = \eta \cdot \delta(\eta)$, with $\delta$ as in Lemma~\ref{lem:regpiece}. This yields an equitable partition $V(G) = \cup_{i=1}^m V_i$ where all but $\epsilon m^2 \leq \eta m^2$ pairs $(V_i, V_j)$ with $1 \leq i \neq j \leq m$ are $\epsilon$-regular. Within each piece $V_i$, now apply Lemma~\ref{lem:regpiece} to find a set $W_i$ of order at least $\delta(\eta)$ which is $\eta$-regular. Note that if $(V_i, V_j)$ is $\epsilon$-regular, then, since $|W_i| \geq \delta |V_i|$ and $\epsilon = \eta \cdot \delta(\eta)$, the pairs $(W_i, V_j)$ and $(W_i, W_j)$ are $\eta$-regular with $|d(W_i, V_j) - d(V_i, V_j)| \leq \epsilon \leq \eta$  and $|d(W_i, W_j) - d(V_i, V_j)| \leq \eta$.
\end{proof}

In order to apply Lemma~\ref{lem:main}, we need a standard counting lemma (see, for example,~\cite[Theorem~18]{RS10}). We use the shorthand $x \pm \delta$ to indicate a quantity that lies between $x - \delta$ and $x + \delta$.

\begin{lemma} \label{lem:count}
For any $\delta > 0$ and any natural number $k$, there is $\eta > 0$ such that if $U_1, \dots, U_k$ are (not necessarily distinct) vertex sets with $(U_i, U_{i'})$ $\eta$-regular of density $d_{i,i'}$ for all $1 \leq i < i' \leq k$, then there are
\[\prod_{i < i'} d_{i,i'} \prod_{i=1}^k |U_i| \pm \delta \prod_{i=1}^k |U_i|\]
copies of $K_k$ with vertex $i$ in $U_i$ for each $1 \leq i \leq k$.
\end{lemma}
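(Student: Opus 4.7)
The plan is induction on $k$. The base case $k=2$ holds trivially with zero error, since the number of $K_2$'s between $U_1$ and $U_2$ is exactly $d_{1,2}|U_1||U_2|$. For the inductive step, I first dispose of the degenerate case in which some $d_{i,j} < \delta/2$: the number of $K_k$'s is then at most $d_{i,j}\prod_l |U_l| \le (\delta/2)\prod_l|U_l|$, and the main term $\prod_{i<i'}d_{i,i'}\prod_l|U_l|$ is likewise at most $(\delta/2)\prod_l|U_l|$, so the two agree to within $\delta\prod_l|U_l|$.

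Assume henceforth that every $d_{i,j}\ge \delta/2$. By $\eta$-regularity of the pairs $(U_1,U_j)$ for $j\ge 2$, all but at most $(k-1)\eta|U_1|$ vertices $v\in U_1$ satisfy $|N(v)\cap U_j|=(d_{1,j}\pm\eta)|U_j|$ for every $j\ge 2$; call this good set $U_1^*$. For each $v\in U_1^*$, set $U_j^v:=N(v)\cap U_j$. Provided $\eta\le \delta/4$, we have $|U_j^v|\ge (\delta/4)|U_j|$, and the standard slicing property of $\eta$-regularity then implies that every pair $(U_j^v,U_{j'}^v)$ with $2\le j<j'\le k$ is $(4\eta/\delta)$-regular of density $d_{j,j'}\pm\eta$. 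Applying the inductive hypothesis (with parameter $\delta/(2k)$, say) to the tuple $(U_2^v,\dots,U_k^v)$ yields the number of $K_{k-1}$'s with vertex $j$ in $U_j^v$ up to relative error $\delta/(2k)$. Substituting $|U_j^v|=(d_{1,j}\pm\eta)|U_j|$ and summing over $v\in U_1^*$ produces the desired count, while the neglected set $U_1\setminus U_1^*$ contributes at most $(k-1)\eta\prod_l|U_l|$ to the error.

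The main obstacle is purely bookkeeping: each regularity step introduces a multiplicative error $\eta$, slicing weakens the regularity parameter from $\eta$ to $4\eta/\delta$, and the inductive step contributes a further relative error $\delta/(2k)$. One must therefore choose $\eta$ small enough in terms of $\delta$, $k$, and the inductive constant $\eta_{k-1}$ so that, after expanding $\prod_{j\ge 2}(d_{1,j}\pm\eta)$ and combining all contributions, the total deviation from $\prod_{i<i'}d_{i,i'}\prod_l|U_l|$ is at most $\delta\prod_l|U_l|$. This cascade of constants is routine, and no step requires any idea beyond the slicing property and the first-moment consequence of regularity.
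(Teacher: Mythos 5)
Your proof is correct and is the standard inductive proof of the counting lemma via the slicing property of regular pairs; the paper itself gives no proof of Lemma~\ref{lem:count}, citing \cite{RS10} instead, and the argument there is essentially the one you give (handle small densities trivially, embed a typical vertex of $U_1$, slice, and induct). Two small points of bookkeeping: the two-sided degree condition excludes up to $2(k-1)\eta|U_1|$ vertices rather than $(k-1)\eta|U_1|$, and since the $U_i$ need not be distinct one should fix the convention that ``copies'' means labelled cliques on distinct vertices, for which degenerate tuples are a negligible correction; neither affects the argument.
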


In practice, we will always use this lemma in the following form.

\begin{lemma} \label{lem:count2}
For any $\delta > 0$ and any natural number $k$, there is $\eta > 0$ such that if $U_1, \dots, U_k$, $U_{k+1}, \dots, U_{k+\ell}$ are (not necessarily distinct) vertex sets with $(U_i, U_{i'})$ $\eta$-regular of density $d_{i,i'}$ for all $1 \leq i < i' \leq k$ and $1 \leq i \leq k < i' \leq k + \ell$ and $d_{i,i'} \geq \delta$ for all $1 \leq i < i' \leq k$, then there is a copy of $K_k$ with vertex $i$ in $U_i$ for each $1 \leq i \leq k$ which is contained in at least
\[\sum_{j=1}^\ell \left(\prod_{i=1}^k d_{i, k+j} - \delta\right) |U_{k+j}|\]
copies of $K_{k+1}$ with vertex $k+1$ in $\cup_{j=1}^\ell U_{k+j}$.
\end{lemma}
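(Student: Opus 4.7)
The proof is via a double-counting and averaging argument layered on top of the counting lemma (Lemma~\ref{lem:count}). First, I would apply Lemma~\ref{lem:count} to the sets $U_1, \ldots, U_k$ to see that the number $T$ of copies of $K_k$ with vertex $i$ in $U_i$ satisfies $T = (D \pm \delta') \prod_{i=1}^k |U_i|$, where $D = \prod_{1 \leq i < i' \leq k} d_{i,i'}$ and $\delta' > 0$ is a small parameter to be chosen at the end. Then, for each $j \in \{1, \ldots, \ell\}$, I would apply Lemma~\ref{lem:count} to $U_1, \ldots, U_k, U_{k+j}$ (all relevant pairs are $\eta$-regular by the hypotheses) to count the number $N_j$ of $K_{k+1}$'s with vertex $i$ in $U_i$ for $i \leq k$ and vertex $k+1$ in $U_{k+j}$, obtaining
\[
N_j = \Bigl( D \prod_{i=1}^k d_{i,k+j} \pm \delta' \Bigr) \prod_{i=1}^k |U_i| \cdot |U_{k+j}|.
\]

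Double-counting pairs $(K, v)$ with $K$ a copy of $K_k$ (vertex $i$ in $U_i$) and $v \in U_{k+j}$ adjacent to all of $K$ gives $\sum_K e_j(K) = N_j$, where $e_j(K)$ denotes the number of extensions of $K$ into $U_{k+j}$. Summing over $j$ and averaging over the $T$ copies of $K_k$, there exists some $K$ with
\[
\sum_{j=1}^\ell e_j(K) \;\geq\; \sum_{j=1}^\ell \frac{N_j}{T} \;\geq\; \sum_{j=1}^\ell \frac{D \prod_{i=1}^k d_{i,k+j} - \delta'}{D + \delta'} \, |U_{k+j}| \;\geq\; \sum_{j=1}^\ell \Bigl( \prod_{i=1}^k d_{i,k+j} - \frac{2\delta'}{D} \Bigr) |U_{k+j}|,
\]
where the last step uses $\prod_i d_{i,k+j} \leq 1$. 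Since the hypothesis $d_{i,i'} \geq \delta$ gives $D \geq \delta^{\binom{k}{2}}$, it suffices to choose $\delta' = \tfrac{1}{2} \delta^{\binom{k}{2}+1}$ and to let $\eta$ be the value provided by Lemma~\ref{lem:count} for $k+1$ sets with error parameter $\delta'$.

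There is no real obstacle here beyond bookkeeping the parameters. The essential point is that the density lower bound $d_{i,i'} \geq \delta$ keeps $T$ bounded away from zero, so the additive errors in Lemma~\ref{lem:count} can be absorbed into $\delta$ after dividing $N_j$ by $T$. Note that we never require any regularity between the pages $U_{k+j}$ and $U_{k+j'}$ for $j \neq j'$, which is precisely what makes this form of the counting lemma suited to the application of Lemma~\ref{lem:main}, where such pairs may be irregular.
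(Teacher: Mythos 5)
Your argument is correct and is essentially the paper's own proof: apply Lemma~\ref{lem:count} once with $k$ parts to upper-bound the number of copies of $K_k$, once with $k+1$ parts for each $j$ to lower-bound the number of extensions into $U_{k+j}$, and then average, absorbing the additive errors using $D \geq \delta^{\binom{k}{2}}$ with the same choice $\delta' = \delta^{\binom{k}{2}+1}/2$. No substantive differences.
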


\begin{proof}
By Lemma~\ref{lem:count} applied with $\delta' = \delta^{\binom{k}{2} + 1}/2$ instead of $\delta$, there exists $\eta_0 = \eta(\delta', k)$ such that the number of copies of $K_k$ with vertex $i$ in $U_i$ for each $1 \leq i \leq k$ is at most
\[\prod_{1 \leq i < i' \leq k} d_{i,i'} \prod_{i=1}^k |U_i| + \delta' \prod_{i=1}^k |U_i|.\]
Moreover, by repeated application of Lemma~\ref{lem:count} with $k+1$ parts, there exists $\eta_1 = \eta(\delta', k+1)$ such that the number of copies of $K_{k+1}$ with vertex $i$ in $U_i$ for each $1 \leq i \leq k$ and vertex $k+1$ in $\cup_{j=1}^\ell U_{k+j}$ is at least
\[\sum_{j=1}^\ell \prod_{i=1}^k d_{i, k+ j} |U_{k+j}| \prod_{1 \leq i < i' \leq k} d_{i,i'}  \prod_{i=1}^{k} |U_i| - \delta' \sum_{j=1}^\ell |U_{k+j}| \prod_{i=1}^{k} |U_i|.\]
Therefore, for $\eta =\min(\eta_0, \eta_1)$, there must be some $K_k$ which is in at least
\begin{align*}
\frac{ \sum_{j=1}^\ell \prod_{i=1}^k d_{i, k+ j} |U_{k+j}| \prod_{1 \leq i < i' \leq k} d_{i,i'} - \delta' \sum_{j=1}^\ell |U_{k+j}|}{\prod_{1 \leq i < i' \leq k} d_{i,i'} + \delta'} & \geq \frac{\sum_{j=1}^\ell \left(\prod_{i=1}^k d_{i, k+j} - \delta^{-\binom{k}{2}} \delta'\right) |U_{k+j}|}{1 + \delta^{-\binom{k}{2}} \delta'}\\
& \geq \sum_{j=1}^\ell \left(\prod_{i=1}^k d_{i, k+j} - \delta\right) |U_{k+j}|
\end{align*}
copies of $K_{k+1}$, as required.
\end{proof}

\subsection{A coloured extremal result}

We will need a coloured version of the celebrated Erd\H{o}s--Stone theorem~\cite{ES46}. Recall that a blow-up of a graph $H$ is a graph where each vertex of $H$ is replaced by a vertex set and the bipartite graph between two such vertex sets is complete whenever the corresponding vertices are joined by an edge.

\begin{lemma} \label{lem:ES1}
For any natural numbers $k$ and $t$ and any $\delta > 0$, there exists a natural number $n_0$ such that if the edges of the complete graph on $n \geq n_0$ vertices are coloured in red and blue, then, provided the blue density is at least $1 - \frac{1}{k-1} + \delta$, there is a blue blow-up of $K_k$ with $t$ vertices in each part, where each part is itself a monochromatic clique.
\end{lemma}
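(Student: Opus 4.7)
The plan is to combine two classical results: the Erd\H{o}s--Stone theorem applied to the blue graph, and Ramsey's theorem applied inside each part of the resulting blow-up.

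First I would ignore the red edges entirely and consider the blue graph $B$ on $n$ vertices, whose edge density is at least $1 - \frac{1}{k-1} + \delta$. By the Erd\H{o}s--Stone theorem, for any target part size $s$ there exists $n_1 = n_1(k, s, \delta)$ such that any graph on $n \geq n_1$ vertices with edge density at least $1 - \frac{1}{k-1} + \delta/2$ contains $K_k(s, s, \dots, s)$, the complete $k$-partite graph with $s$ vertices in each part. Applying this with $s := r(K_t, K_t)$, the diagonal Ramsey number, I obtain $k$ disjoint vertex sets $U_1, \dots, U_k$, each of size $s$, such that every edge between $U_i$ and $U_{i'}$ (for $i \neq i'$) is blue.

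Next I would apply Ramsey's theorem inside each part. Since $|U_i| = r(K_t, K_t)$, the 2-colouring of the edges of the complete graph on $U_i$ induced by the original colouring contains a monochromatic clique $T_i \subseteq U_i$ of size $t$, where the colour of $T_i$ may be either red or blue and may depend on $i$. Setting $n_0 := n_1(k, r(K_t, K_t), \delta)$ then gives the required blue blow-up of $K_k$ with $t$ vertices in each part, and each part $T_i$ is a monochromatic clique as demanded.

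There is no real obstacle here; the statement is essentially the Erd\H{o}s--Stone theorem followed by a routine Ramsey-type cleaning step inside each blob. The only point requiring mild care is noting that the bound $1 - \frac{1}{k-1} + \delta$ on the blue density sits strictly above the Tur\'an threshold $1 - \frac{1}{k-1}$ for $K_k$, which is precisely what Erd\H{o}s--Stone needs in order to produce a balanced complete $k$-partite blow-up with arbitrarily large (but fixed-in-$n$) part sizes.
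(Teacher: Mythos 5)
Your proof is correct and is essentially identical to the paper's: apply the Erd\H{o}s--Stone theorem to the blue graph to obtain a blue blow-up of $K_k$ with parts of size $r(K_t)$, then apply Ramsey's theorem inside each part to extract a monochromatic clique of size $t$. No further comment is needed.
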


\begin{proof}
Since the blue density is at least $1 - \frac{1}{k-1} + \delta$, the Erd\H{o}s--Stone theorem implies that for $n \geq n_0$ there is a blue blow-up of $K_k$ with at least $r(K_t)$ vertices in each part. Applying Ramsey's theorem within each part then gives the required monochromatic cliques.
\end{proof}

In practice, we will use a slight variant of this lemma, where the underlying graph is not necessarily complete.

\begin{lemma} \label{lem:ES2}
For any natural numbers $k$ and $t$ and any $\delta > 0$, there exists a natural number $n_1$ and $\epsilon > 0$ such that if the edges of a graph on $n \geq n_1$ vertices with $(1 - \epsilon) \binom{n}{2}$ edges are coloured in red and blue, then, provided the blue density is at least $1 - \frac{1}{k-1} + \delta$, there is a blue blow-up of $K_k$ with $t$ vertices in each part, where each part is itself a monochromatic clique.
\end{lemma}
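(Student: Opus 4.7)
The plan is to mimic the proof of Lemma~\ref{lem:ES1}, but with a supersaturation/deletion step to cope with the $\epsilon\binom{n}{2}$ missing pairs of $G$. Fix $T = r(K_t, K_t)$, the diagonal Ramsey number, so that every 2-colouring of the edges of $K_T$ contains a monochromatic $K_t$. Since the blue subgraph of $G$ has edge density at least $1 - \tfrac{1}{k-1} + \delta$ in $K_n$, the supersaturated form of the Erd\H{o}s--Stone theorem (Erd\H{o}s--Simonovits) guarantees a constant $c = c(k,T,\delta) > 0$ such that, for $n$ sufficiently large, there are at least $c\, n^{kT}$ ordered tuples $(v_{i,a})_{i \in [k],\, a \in [T]}$ of distinct vertices for which every cross pair $\{v_{i,a}, v_{j,b}\}$ with $i \neq j$ is a blue edge of $G$.

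Next, I would cull the tuples whose parts contain a non-edge of $G$. For each within-part slot $(i, a, b)$ with $i \in [k]$ and $a < b \in [T]$, the number of tuples for which $\{v_{i,a}, v_{i,b}\}$ fails to be an edge of $G$ is at most $2\epsilon\binom{n}{2}\cdot n^{kT-2} \le \epsilon\, n^{kT}$, so a union bound over the $k\binom{T}{2}$ slots bounds the total count of bad tuples by $kT^2\epsilon\, n^{kT}$. Choosing $\epsilon < c/(kT^2)$ leaves at least one good tuple, in which every cross pair is a blue edge and every within-part pair is an edge of $G$ (red or blue).

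In each part of such a good tuple we have a clique of size $T$ in $G$ whose edges are 2-coloured, so Ramsey's theorem together with the choice $T = r(K_t, K_t)$ produces $t$ vertices inducing a monochromatic $K_t$. Doing this in each of the $k$ parts produces the desired blue blow-up of $K_k$ with every part a monochromatic clique.

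The only ingredient not already present in the excerpt is the supersaturation form of Erd\H{o}s--Stone; this is classical, but if one wishes to avoid citing it, the required counting statement can instead be obtained from the regularity/counting package already developed: apply Lemma~\ref{lem:reg} to the blue subgraph, use Erd\H{o}s--Stone on the reduced graph to locate $k$ regular parts of pairwise blue density close to $1$, and apply Lemma~\ref{lem:count} to count blue-cross-edge $K_k(T)$ configurations. The main potential obstacle, then, is nothing deeper than choosing $\epsilon$ small enough against the supersaturation constant $c(k,T,\delta)$, which in turn determines $n_1$.
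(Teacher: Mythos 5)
Your proof is correct and rests on the same basic deletion idea as the paper's: generate many candidate configurations and discard those that meet one of the $\epsilon\binom{n}{2}$ missing pairs. The difference is in how the abundance is produced. The paper recolours the missing pairs blue so that the underlying graph becomes complete, applies Lemma~\ref{lem:ES1} inside every $n_0$-subset, and averages to obtain at least $n^{kt}/(2n_0^{kt})$ blow-ups whose parts are already monochromatic cliques; it then deletes the at most $\epsilon n^{kt}$ of these that contain a missing (recoloured) edge anywhere, cross or within-part. You instead invoke the supersaturated Erd\H{o}s--Stone theorem to get $c\,n^{kT}$ blue copies of $K_k(T)$ with $T = r(K_t,K_t)$, delete only the tuples with a missing within-part pair (the cross pairs are genuine blue edges by construction), and defer Ramsey's theorem inside each part until the end. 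Both routes work and your quantification of $\epsilon$ against the supersaturation constant is sound; the paper's averaging trick has the small advantage of deriving the needed supersaturation implicitly from the non-supersaturated Lemma~\ref{lem:ES1}, while your version keeps the missing edges honest throughout at the cost of citing one extra classical result, which, as you note, could alternatively be extracted from the regularity and counting lemmas already present.
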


\begin{proof}
Suppose first that the $\epsilon \binom{n}{2}$ missing edges are coloured blue, so that the underlying graph is complete. Then, by Lemma~\ref{lem:ES1}, every subset of the graph of order $n_0$ contains the required blow-up of $K_k$ with monochromatic parts of order $t$. But then, for $n$ sufficiently large in terms of $n_0$, there must be at least
\[\binom{n}{n_0}/\binom{n - kt}{n_0 - kt} = \binom{n}{kt}/\binom{n_0}{kt} \geq \frac{n^{kt}}{2 n_0^{kt}}\]
such blow-ups of $K_k$. However, at most $\epsilon n^2 \cdot n^{kt-2} = \epsilon n^{kt}$ such copies contain an edge from the missing set. Therefore, for $\epsilon < 1/2 n_0^{kt}$, we must have the required blue blow-up of $K_k$ with at least $t$ vertices in each part, where each part is a monochromatic clique. 
\end{proof}

\subsection{Some technical lemmas}

The proof requires a small degree of optimisation, almost all of which is contained in the following two lemmas.

\begin{lemma} \label{lem:dichotomy}
For each $i = 1, \dots, k$, let $x_i$ be a real number between $0$ and $t$. Then 
$$\frac{1}{k} \sum_{i=1}^k (t - x_i)^k + \prod_{i=1}^k x_i \geq 2 (t/2)^k.$$
\end{lemma}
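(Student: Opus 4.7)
The plan is to treat $F(x_1, \ldots, x_k) := \tfrac{1}{k}\sum_{i=1}^k(t-x_i)^k + \prod_{i=1}^k x_i$ as a function on the cube $[0,t]^k$ and to show that its minimum is $2(t/2)^k$, attained (uniquely, for $k \geq 3$) at $x_1 = \cdots = x_k = t/2$.

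First, $F$ is convex in each coordinate separately, since $\partial^2 F/\partial x_i^2 = (k-1)(t-x_i)^{k-2} \geq 0$. Hence at any global minimum, each $x_i$ minimises the convex slice $x_i \mapsto F(\ldots, x_i, \ldots)$ over $[0,t]$. Using the formula $\partial F/\partial x_i = -(t-x_i)^{k-1} + \prod_{j \neq i}x_j$, this means that for every $i$, either (a) $x_i = 0$ with $\prod_{j \neq i}x_j \geq t^{k-1}$, or (b) $x_i = t$ with $\prod_{j \neq i}x_j \leq 0$, or (c) $x_i \in (0,t)$ with $(t-x_i)^{k-1} = \prod_{j \neq i}x_j$. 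Options (a) and (b) both force some coordinate to equal $0$; the corresponding $(t-x_j)^k$-term then contributes $t^k$ to the sum, so $F \geq t^k/k \geq 2(t/2)^k$, the last step using the elementary bound $2^{k-1} \geq k$ valid for all $k \geq 1$. The only remaining configuration is an interior critical point with every $x_i \in (0,t)$.

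The main technical work lies in classifying these interior critical points. Setting $\partial F/\partial x_j = 0$ gives $x_j(t-x_j)^{k-1} = P := \prod_i x_i$ for every $j$, and multiplying all $k$ such identities forces $\prod_j(t-x_j) = P$ as well. Since $g(x) := x(t-x)^{k-1}$ is unimodal on $[0,t]$ with maximum at $t/k$, the multiset $\{x_1, \ldots, x_k\}$ consists of at most two distinct values $u \leq v$, with multiplicities $a$ and $k-a$. The two consequent equations $(t-u)^{k-1} = u^{a-1}v^{k-a}$ and $(t-v)^{k-1} = u^a v^{k-a-1}$, combined via the substitution $s := (v/u)^{1/(k-1)} \geq 1$, should reduce after elementary manipulation to the single identity $1 + s + s^2 + \cdots + s^{k-2} = s^{k-a-1}$. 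For $s > 1$ the left-hand side strictly exceeds $s^{k-2}$, forcing $k-a-1 > k-2$, i.e.\ $a \leq 0$, which contradicts $a \geq 1$. Hence $s = 1$, so $u = v$, and $(t-u)^{k-1} = u^{k-1}$ forces $u = t/2$; at this critical point $F = (t/2)^k + (t/2)^k = 2(t/2)^k$. The main obstacle is this final algebraic reduction from the two-value critical system to the clean polynomial identity in $s$; everything else is standard convexity and boundary analysis.
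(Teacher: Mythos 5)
Your argument is correct in substance but takes a genuinely different route from the paper. The paper first disposes of $k=2,3,4$ by direct checking and, for $k\ge 5$, reduces to the regime $x_i\ge t/k$, fixes the product $\prod_i x_i=z$, symmetrises via the substitution $x_i=e^{y_i}$ (using convexity of $(t-e^y)^k$ on that range) to force all $x_i$ equal, and finishes with the one-variable minimisation of $(t-z^{1/k})^k+z$. You instead do a direct critical-point classification: coordinate-wise convexity reduces everything to interior critical points, the unimodality of $g(x)=x(t-x)^{k-1}$ shows such a point takes at most two values $u\le v$, and the two stationarity equations collapse to $1+s+\cdots+s^{k-2}=s^{k-a-1}$ with $s=(v/u)^{1/(k-1)}$. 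I checked this reduction (which you only assert ``should'' work): dividing the two equations gives $(t-u)/(t-v)=s$, whence $u=t/(1+s+\cdots+s^{k-1})$, and substituting back does yield exactly your identity. Your approach buys a uniform treatment of all $k\ge 3$ with no hand-checking of small cases and no need for the $x_i\ge t/k$ reduction; the paper's approach is shorter once the small cases are granted.

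One concrete point needs fixing: the final contradiction fails for $k=2$. There the identity reads $1=s^{1-a}$ with $a=1$, i.e.\ $1=1$, which holds for every $s$, and indeed the interior critical points form the entire line $x_1+x_2=t$ (your parenthetical ``uniquely, for $k\ge 3$'' acknowledges non-uniqueness but your argument as written proves nothing for $k=2$). The lemma still holds there, since $F=\tfrac12(x_1+x_2-t)^2+t^2/2\ge 2(t/2)^2$, so you should simply dispatch $k=2$ separately before running the classification for $k\ge 3$. With that addition, and with the algebraic reduction written out, the proof is complete.
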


\begin{proof}
As the result is easily checked for $k = 2$, $3$ and $4$, we can assume without loss of generality that $k \geq 5$. Moreover, since $(t - t/k)^k > 2(t/2)^k k$ for all $k \geq 5$, we may assume that none of the $x_i$ are less than $t/k$.

We claim that the minimum value of $\sum_i (t - x_i)^k$ subject to the constraint $\prod_i x_i = z$, and assuming $x_i \geq t/k$ for all $i$, occurs when all the $x_i$ are equal to $z^{1/k}$. To see this, make the substitution $x_i = e^{y_i}$. The problem then becomes to minimise $\sum_{i=1}^k (t - e^{y_i})^k$ subject to the constraint $\sum_{i=1}^k y_i = \log z$. But the function $(t - e^y)^k$ is easily seen to be a convex function of $y$ for $t/k \leq e^y \leq t$. Therefore, the minimum occurs when all of the $e^{y_i}$ and, hence, all of the $x_i$ are equal.

Substituting $x_i = z^{1/k}$ for all $i$, it simply remains to minimise $f(z) = (t - z^{1/k})^k + z$ on the interval $[0, t^k]$. But $f'(z) = - (t - z^{1/k})^{k-1} z^{-(k-1)/k} + 1$, which equals $0$ precisely when $z = (t/2)^k$. Hence, the minimum value of $f(z)$ is $2 (t/2)^k$, as required.
\end{proof}

\begin{lemma} \label{lem:degprod}
Suppose that $k \leq \ell$ and, for each $i = 1, \dots, \ell$, let $x_i$ be a real number between $0$ and $1$. Then
\[\sum_{1\leq i_1 < \dots < i_k \leq \ell} \prod_{j=1}^k x_{i_j} \geq \binom{\sum_i x_i}{k}.\]
\end{lemma}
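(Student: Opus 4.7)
The plan is to reduce $e_k(x_1,\dots,x_\ell) := \sum_{i_1 < \dots < i_k} \prod_j x_{i_j}$ to its minimum value on the polytope $\{x \in [0,1]^\ell : \sum_i x_i = s\}$, with the sum $s := \sum_i x_i$ held fixed, and then finish by a convexity argument applied to the generalized binomial coefficient $y \mapsto \binom{y}{k} = y(y-1)\cdots(y-k+1)/k!$.

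First, I would argue that this minimum is attained at a vertex of the polytope. Fixing all coordinates except $x_i, x_j$ as well as the value of $x_i + x_j$, and classifying $k$-subsets according to how many of $\{i,j\}$ they contain, one obtains
\[ e_k \;=\; A + (x_i + x_j)\,B + (x_i x_j)\,C, \]
where $A$, $B$, $C$ are the quantities $e_k$, $e_{k-1}$, $e_{k-2}$ of the other $\ell - 2$ coordinates. This expression is affine in $x_i x_j$ with the nonnegative coefficient $C$, so $e_k$ is minimized precisely when $x_i x_j$ is, namely at an endpoint of the segment $\{x_i + x_j = \text{const}\} \cap [0,1]^2$, where one of $x_i, x_j$ is forced to $0$ or $1$. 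Iterating this spreading reduces us to a vertex of the polytope, where $m := \lfloor s \rfloor$ of the coordinates equal $1$, one equals $f := s - m \in [0,1)$, and the remaining coordinates equal $0$.

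Second, at such a vertex, splitting $k$-subsets by whether they contain the unique fractional coordinate gives
\[ e_k \;=\; \binom{m}{k} + f\binom{m}{k-1}. \]
By Pascal's identity $\binom{m+1}{k} = \binom{m}{k} + \binom{m}{k-1}$, the right-hand side is precisely the value at $y = s$ of the linear interpolant of $y \mapsto \binom{y}{k}$ between the integer points $y = m$ and $y = m+1$.

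Third, I would verify that $y \mapsto \binom{y}{k}$ is convex on $[k-1,\infty)$. Its second derivative is the sum of products $\prod_{l \neq i,j}(y-l)$ taken over ordered pairs $0 \leq i \neq j \leq k-1$; each such product is nonnegative once every factor $y - l$ is, i.e. once $y \geq k-1$. Hence, for $s \geq k-1$ (so that $m \geq k-1$) the chord over $[m,m+1]$ lies above the curve, and
\[ \binom{m}{k} + f\binom{m}{k-1} \;\geq\; \binom{m+f}{k} \;=\; \binom{s}{k}, \]
as required. In the complementary range $s < k-1$ the inequality holds trivially under the standard convention that $\binom{s}{k}$ is taken to vanish there, since $e_k$ is nonnegative. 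The sole non-routine ingredient is the second-derivative convexity check on $[k-1,\infty)$, which amounts to the observation that a sum of products of nonnegative factors is nonnegative; all other steps use only the symmetry of $e_k$ and Pascal's identity.
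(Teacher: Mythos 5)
Your proof is correct and follows essentially the same route as the paper's: reduce to a vertex of the polytope by the two-variable shifting argument, evaluate $e_k$ there as $\binom{m}{k}+f\binom{m}{k-1}$, and finish by convexity of $y\mapsto\binom{y}{k}$. The only differences are cosmetic --- the paper obtains the rewriting $(1-f)\binom{m}{k}+f\binom{m+1}{k}$ by a probabilistic double count rather than Pascal's identity, and you are more explicit than the paper about the range $y\geq k-1$ on which convexity holds (and about the degenerate case $s<k-1$).
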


\begin{proof}
Suppose that $\sum_i x_i = c$ and we wish to minimise the left-hand side of the required inequality under this constraint. We claim that the minimum occurs when all but one of the $x_i$ equal $0$ or $1$, that is, $\lfloor c \rfloor$ of the $x_i$ are $1$, one is $\{c\} = c - \lfloor c \rfloor$ and the rest are $0$. 

Suppose instead that $x_1$ and $x_2$, say, are both different from $0$ and $1$. Then $x_1 x_2 = x_1 (c - \sum_{i=3}^k x_i - x_1)$, which has the form $-x_1^2 + Bx_1$, where $B$ is a function of $x_3, \dots, x_k$ and hence constant if these variables are held constant. But such a polynomial is minimised when $x_1$ is either as large or as small as possible within its allowed range. Hence, if $x_1$ and $x_2 = c - \sum_{i=3}^k x_i - x_1$ are both different from $0$ and $1$, we may vary $x_1$, keeping all $x_i$ with $3 \leq i \leq k$ fixed, to make $x_1 x_2$, and thus $\prod_{i=1}^k x_i$, smaller. This contradiction proves the claim, so
\[\sum_{1\leq i_1 < \dots < i_k \leq \ell} \prod_{j=1}^k x_{i_j} \geq \binom{\lfloor c \rfloor}{k} + \{ c\} \binom{\lfloor c \rfloor}{k-1} \geq \binom{c}{k}.\]
To establish the final inequality, suppose that $X$ is a random subset of a $(\lfloor c \rfloor + 1)$-element set, where the first element is chosen with probability $\{c\}$ and all other elements with probability $1$. The expected number of subsets of size $k$ in this random set is then 
$$\binom{\lfloor c \rfloor}{k} + \{ c\} \binom{\lfloor c \rfloor}{k-1}.$$ 
But it is also equal to 
$$(1 - \{c\}) \binom{\lfloor c \rfloor}{k} + \{c\}\binom{\lfloor c \rfloor + 1}{k},$$ 
which by convexity of $\binom{x}{k}$ is at least $\binom{c}{k}$.
\end{proof}

\section{Proof of Theorem~\ref{thm:main}}

Suppose that we have a red/blue-colouring of the edges of the complete graph on $N = (2^k + \epsilon) n$ vertices. Assume that $\eta$ is taken sufficiently small and $m_0$ sufficiently large in terms of $k$ and $\epsilon$ and apply Lemma~\ref{lem:main} with $\eta$ and $m_0$ to the red subgraph to obtain an equitable partition $\cup_{i=1}^m V_i$ of the vertex set $[N]$ with $m \geq m_0$ and subsets $W_i \subseteq V_i$ such that $W_i$ is $\eta$-regular for all $i$ and, for all but $\eta m^2$ pairs $(i, j)$ with $1 \leq i \neq j \leq m$, $(V_i, V_j)$, $(W_i, V_j)$ and $(W_i, W_j)$ are $\eta$-regular with $|d(W_i, V_j) - d(V_i, V_j)| \leq \eta$ and $|d(W_i, W_j) - d(V_i, V_j)| \leq \eta$, where $d(U, V)$ measures the red density between vertex sets $U$ and $V$. Because the colours are complementary, the same conclusion holds for the blue subgraph. For convenience of notation, we will assume below that all $V_i$ have precisely the same order $N/m$.

We now form a coloured reduced graph with vertex set $v_1, \dots, v_m$. To each $v_i$, we assign a colour $c_i$, either red or blue, depending on which colour has the higher density inside $W_i$, breaking ties arbitrarily. By the pigeonhole principle, at least $m/2$ of the $c_i$ are the same colour, say red. We now colour the edges of the reduced graph, leaving an edge uncoloured if $(W_i, V_j)$, $(V_i, V_j)$ and $(W_i, W_j)$ are not all $\eta$-regular with $|d(W_i, V_j) - d(V_i, V_j)| \leq \eta$ and $|d(W_i, W_j) - d(V_i, V_j)| \leq \eta$. Otherwise, we fix a constant $\delta$ (which will be taken sufficiently small in terms of $k$ and $\epsilon$) and colour the edge $v_i v_j$ red if the red density between $V_i$ and $V_j$ is at least $1 - \delta$ and blue if the blue density is at least $\delta$, again breaking ties arbitrarily. Note that there are at most $\eta m^2$ ordered pairs $(i, j)$ whose corresponding edge is uncoloured. Therefore, by deleting at most $\sqrt{\eta} m$ vertices, we may assume that each vertex is adjacent to at most $\sqrt{\eta} m$ uncoloured edges. In what follows, when referring to the reduced graph, we will assume that these vertices have been removed. Note that at least $s = \lceil (1/2 - \sqrt{\eta}) m \rceil$ of the remaining vertices have colour red.

Suppose now that there is a red vertex $v_a$ in the reduced graph which has degree at least $\ell := 2^{-k} m$ in red, with neighbours $v_{b_1}, \dots, v_{b_\ell}$. Since the density of red edges in $W_a$ is at least $1/2$, we may apply Lemma~\ref{lem:count2} with $U_1 = \dots = U_k = W_a$ and $U_{k+j} = V_{b_j}$ for $j = 1, \dots, \ell$ to conclude that, for $\eta$ sufficiently small in terms of $\delta$, there is a red $K_k$ which is contained in at least
\[\sum_{j=1}^\ell (d(W_a, V_{b_j})^k - \delta) |V_{b_j}| \geq ( (1 - \delta - \eta)^k - \delta) \ell \frac{N}{m} = ( (1 - \delta - \eta)^k - \delta) 2^{-k} N\]
red $K_{k+1}$. Provided $\eta$ and $\delta$ are sufficiently small in terms of $k$ and $\epsilon$, this quantity is at least $n$, so we obtain the required book $B_n^{(k)}$. We may therefore assume that we are in the other case, where every red vertex in the reduced graph has blue degree at least $m - \ell - 2\sqrt{\eta} m \geq (1 - 2^{-k} - 2\sqrt{\eta})m$.

The degree of each red vertex is therefore at least $(1 - 2^{-k} - 2\sqrt{\eta})m$ in blue. If we restrict to a set $S$ consisting of $s$ of the red vertices, the blue degree of each vertex inside this set is at least $s - (2^{-k} + 2\sqrt{\eta})m \geq (1 - 2^{-(k-1)} - 16 \sqrt{\eta})s$. Since $1 - 2^{-(k-1)} - 16 \sqrt{\eta} > 1 - (k-1)^{-1} + \beta$ for some $\beta > 0$ depending only on $k$ and the number of uncoloured edges is at most $\eta m^2 \leq 8 \eta s^2$, Lemma~\ref{lem:ES2} implies that for $m$ sufficiently large and $\eta$ sufficiently small in terms of $k$ and $t$, where $t$ is a constant to be fixed below, the reduced graph contains a blue blow-up of $K_k$ with at least $t$ vertices in each part, where each part is itself a monochromatic clique.

We now claim that none of these monochromatic cliques can be blue. Indeed, suppose otherwise and $C$ is a blue clique of order $t$. If any of the vertices in $C$, say $v_a$, is such that $\sum_j d(W_a, V_j) \geq \frac{1}{2} m$, where the sum is taken over all $j$ such that $(W_a, V_j)$ is $\eta$-regular, then we have
\[\sum_j d(W_a, V_j)^k \geq m \left(\frac{\sum_j d(W_a, V_j)}{m}\right)^k \geq 2^{-k} m.\]
Again, since the density of red edges in $W_a$ is at least $1/2$, we may apply Lemma~\ref{lem:count2} with $U_1 = \dots = U_k = W_a$ and $U_{k+j}$ equal in turn to each of the $V_j$ for which $(W_a, V_j)$ is $\eta$-regular to conclude that, for $\eta$ sufficiently small in terms of $\delta$, there is a red $K_k$ which is contained in at least
\[\sum_j (d(W_a, V_j)^k - \delta) |V_j| \geq (2^{-k} - \delta) N\]
red $K_{k+1}$. Provided $\eta$ and $\delta$ are sufficiently small in terms of $k$ and $\epsilon$, this quantity is at least $n$, so we again obtain the required book $B_n^{(k)}$.

Therefore, writing $\overline{d}(U, V)$ for the blue density between sets $U$ and $V$, we must have $\sum_j \overline{d}(W_a, V_j) \geq (\frac{1}{2} - 2\sqrt{\eta})m$ for all $v_a \in C$, where the sum is now over all $j$ such that $v_j$ is in the reduced graph. Writing $\overline{d}_C(V_j) = \sum_{v_a \in C} \overline{d}(W_a, V_j)$, we see, by applying Lemma~\ref{lem:degprod} and summing over all $j$ such that $v_j$ is in the reduced graph, that
\[\sum_j \sum_{(a_1, \dots, a_k) \in \binom{C}{k}} \prod_{i=1}^k \overline{d}(W_{a_i}, V_j) \geq \sum_j \binom{\overline{d}_C(V_j)}{k} \geq m \binom{\sum_j \overline{d}_C(V_j)/m}{k}.\]
Therefore, since $\sum_j \overline{d}_C(V_j) \geq \frac{1}{2} (1-4\sqrt{\eta})m |C|$, we have, for $t = |C| \geq (1 + \xi)k/(\xi - 4\sqrt{\eta})$, that
\[\sum_j \sum_{(a_1, \dots, a_k) \in \binom{C}{k}} \prod_{i=1}^k \overline{d}(W_{a_i}, V_j) \geq \binom{\frac{1}{2}(1 - 4\sqrt{\eta})|C|}{k} \geq \left(\frac{1}{2} (1 - \xi)\right)^{k} \binom{|C|}{k} \geq 2^{-k} (1 - k \xi) \binom{|C|}{k},\]
where we used that $\frac{1}{2} (1 - 4 \sqrt{\eta})|C| - i \geq \frac{1}{2} (1 - \xi) (|C| - i)$ for $0 \leq i \leq k$.
Hence, there exists a choice of $a_1, \dots, a_k$ such that
\[\sum_j \prod_{i=1}^k \overline{d}(W_{a_i}, V_j) \geq 2^{-k} (1 - k\xi) m.\]
Since, in the reduced graph, each $v_{a_i}$ has at most $\sqrt{\eta} m$ neighbours $v_j$ such that $(W_{a_i}, V_j)$ is not $\eta$-regular, if we now sum only over those $j$ such that $(W_{a_i}, V_j)$ is $\eta$-regular for all $i$, we have that
\[\sum_j \prod_{i=1}^k \overline{d}(W_{a_i}, V_j) \geq 2^{-k} (1 - k\xi) m - k \sqrt{\eta} m.\]
We now apply Lemma~\ref{lem:count2} with $U_i = W_{a_i}$ for each $1 \leq i \leq k$ and $U_{k+j}$ equal in turn to each $V_j$ with $(W_{a_i}, V_j)$ $\eta$-regular for all $1 \leq i \leq k$ to conclude that, for $\eta$ sufficiently small in terms of $\delta$, there is a blue $K_k$ which is contained in at least
\[\sum_j (\prod_{i=1}^k \overline{d}(W_{a_i}, V_j) - \delta) |V_j| \geq (2^{-k} (1 - k\xi) - k \sqrt{\eta} - \delta)N\]
blue $K_{k+1}$. Provided $\eta$, $\delta$ and $\xi$ are sufficiently small (and $t$ is sufficiently large) in terms of $k$ and $\epsilon$, this quantity is again at least $n$. 

This completes the proof of the claim. We may therefore assume that all of the cliques are red and focus on the subgraph of the reduced graph consisting of the $k$ red cliques $C_1, \dots, C_k$, each of order $t$, where every edge between $C_i$ and $C_j$ with $i \neq j$ is blue.

Now, for each vertex $v$ in the reduced graph, let $e_i(v)$ be the weighted blue degree of $v$ in each $C_i$. That is, $e_i(v) = \sum_{w \in C_i} \overline{d}(v, w)$. By Lemma~\ref{lem:dichotomy}, $\frac{1}{k} \sum_v \sum_i (t - e_i(v))^k + \sum_v \prod_i e_i(v) \geq 2(t/2)^k m'$, which implies that either $\sum_v \sum_i (t - e_i(v))^k \geq (t/2)^k k m'$ or $\sum_v \prod_i e_i(v) \geq (t/2)^k m'$, where $m' = (1 - \sqrt{\eta})m$. In the second case, we see that there must exist a choice of vertices $v_{c_1}, \dots, v_{c_k}$ with $v_{c_i} \in C_i$ such that 
\[\sum_j \prod_{i=1}^k \overline{d}(W_{c_i}, V_j) \geq \frac{\sum_j \sum_{c_1, \dots, c_k} \prod_i \overline{d}(W_{c_i}, V_j)}{t^k} = \frac{\sum_j \prod_i (\sum_{c_i \in C_i} \overline{d}(W_{c_i}, V_j))}{t^k} = \frac{\sum_v \prod_i e_i(v)}{t^k} \geq 2^{-k} m'.\]
Since there are at most $k \sqrt{\eta}m$ vertices $v_j$ such that $(W_{c_i}, V_j)$ is not $\eta$-regular for all $1 \leq i \leq k$, we may apply Lemma~\ref{lem:count2} with $U_i = W_{c_i}$ for $i = 1, \dots, k$ and $U_{k+j}$ equal in turn to each $V_j$ such that $(W_{c_i}, V_j)$ is $\eta$-regular for each $1 \leq i \leq k$ to conclude that, for $\eta$ sufficiently small in terms of $\delta$, there is a blue $K_k$ which is contained in at least
\[\sum_j (\prod_{i=1}^k \overline{d}(W_{c_i}, V_j) - \delta) |V_j| \geq (2^{-k}(1 - \sqrt{\eta}) - k \sqrt{\eta} - \delta)N\]
blue $K_{k+1}$, again giving the required book for $\eta$ and $\delta$ sufficiently small in terms of $k$ and $\epsilon$.

In the first case, there exists a $C_r$ such that $\sum_v (t - e_r(v))^k \geq (t/2)^k m'$. There must therefore exist (not necessarily distinct) vertices $d_1, \dots, d_k \in C_r$ such that 
\[\sum_j \prod_{i=1}^k d(W_{d_i}, V_j) \geq \frac{\sum_j \sum_{d_1, \dots, d_k} \prod_i d(W_{d_i}, V_j)}{t^k} = \frac{\sum_j (\sum_{d \in C_r} d(W_{d}, V_j))^k}{t^k} = \frac{\sum_v (t -e_r(v))^k}{t^k} \geq 2^{-k}m'.\]
If we again remove the at most $k \sqrt{\eta}m$ vertices $v_j$ such that $(W_{d_i}, V_j)$ is not $\eta$-regular for all $1 \leq i \leq k$, we may apply Lemma~\ref{lem:count2} with $U_i = W_{d_i}$ for $i = 1, \dots, k$ and $U_{k+j}$ equal in turn to each $V_j$ such that $(W_{d_i}, V_j)$ is $\eta$-regular for each $1 \leq i \leq k$ to conclude that, for $\eta$ sufficiently small in terms of $\delta$, there is a red $K_k$ which is contained in at least
\[\sum_j (\prod_{i=1}^k d(W_{d_i}, V_j) - \delta) |V_j| \geq (2^{-k}(1 - \sqrt{\eta}) - k \sqrt{\eta} - \delta)N\]
red $K_{k+1}$, giving the required book in this final case provided $\eta$ and $\delta$ are again small enough in terms of $k$ and $\epsilon$. This completes the proof.

\section{Concluding remarks}

One obvious question is whether a multicolour analogue of Theorem~\ref{thm:main} might hold. This is certainly not the case when the number of colours is large. To see this, we use the fact that there exist $q$-colourings of the complete graph on vertex set $\{1, 2, \dots, 2^{qk/4}\}$ with no monochromatic $K_k$ (see, for example, \cite[Section 2.1]{CFS15}). Fix such a colouring $\chi$. We consider the $(q+1)$-coloured complete graph whose vertex set is split into $t = 2^{qk/4}$ vertex sets $V_1, \dots, V_t$, each of order $n$, where every edge between $V_i$ and $V_j$ receives the colour $\chi(i, j)$ and edges internal to any $V_i$ all receive a $(q+1)$st colour. This colouring contains no monochromatic $B_n^{(k)}$, so the $(q+1)$-colour Ramsey number $r(B_n^{(k)}; q + 1) \geq 2^{qk/4} n$, far greater than the $(q+1)^k n$ bound one might hope for. More generally, we have $r(B_n^{(k)}; q + 1) \geq (r(k; q) - 1)n$, so, if true, the problem of showing that $r(B_n^{(k)}; 3) \leq 3^k n + o_k(n)$ is at least as hard as showing that $r(k) \leq 3^{k + o(k)}$. 

It is also tempting to generalise Theorem~\ref{thm:main} to hypergraphs. To this end, we define $B_n^{(k, s)}$ to be the $s$-uniform hypergraph consisting of $n$ copies of $K_{k+1}^{(s)}$, all sharing a common $K_k^{(s)}$. The natural conjecture would then be that
$$r(B_n^{(k,s)}) =  2^{\binom{k}{s-1}} n + o_{k,s}(n).$$
However, this is false for $s \geq 4$. To see this, suppose that $s \geq 3$, $k$ is a multiple of $s$ and there is a $2$-colouring $\chi$ of the $s$-uniform hypergraph on vertex set $\{1, 2, \dots, r - 1\}$ with no monochromatic $K_{k/s}^{(s)}$. Consider the complete $s$-uniform hypergraph whose vertex set is split into $r-1$ vertex sets $V_1, \dots, V_{r-1}$, each of order $n$. To colour this hypergraph, suppose that $\{v_1, \dots, v_s\}$ is an edge and $v_j \in V_{i_j}$ for all $1 \leq j \leq s$. If the $i_j$ are all distinct, we colour the edge by $\chi(i_1, \dots, i_s)$ and if the $i_j$ are all the same, we colour the edge red. Otherwise, we colour the edge blue. Since the colouring $\chi$ contains no monochromatic $K_{k/s}^{(s)}$, at least $s$ elements of the spine of any monochromatic $B_n^{(k,s)}$ are contained in the same set $V_i$. But this implies that the book must be red and, therefore, entirely contained within $V_i$, which is not large enough to contain it, a contradiction. Since we may take $r$ to be $r(K_{k/s}^{(s)})$, this implies that
$$r(B_n^{(k,s)}) \geq (r(K_{k/s}^{(s)}) - 1) n.$$ 
The value of $r(K_k^{(s)})$ is known to be at least an $(s-2)$-fold exponential in $k$ (see, for example,~\cite{CFS15}), so this disproves the conjecture for $s \geq 4$. The $s = 3$ case remains unresolved, though a negative answer would again follow from improved lower bounds for $r(K_k^{(3)})$.

As a final remark, we note that there is a strong analogy between Theorem~\ref{thm:main} and Green's popular progression theorem~\cite{G05} (see also~\cite{GT10}). This says that for every $\epsilon > 0$ there exists $n_0$ such that if $n \geq n_0$ and $A$ is a subset of $\{1, 2, \dots, n\}$ of size $\alpha n$, then there is $d \neq 0$ such that $A$ contains at least $(\alpha^3 - \epsilon)n$ arithmetic progressions of length $3$ with common difference $d$. That is, there are asymptotically at least as many arithmetic progressions of length $3$ in $A$ with common difference $d$ as there would be in a random subset of $\{1, 2, \dots, n\}$ of the same size. A surprising recent result of Fox, Pham and Zhao~\cite{FP18, FPZ18} says that $n_0$ grows as a tower-type function of $\epsilon$, showing that an application of the (arithmetic) regularity lemma in Green's proof is in some sense necessary. It would be very interesting if a similar phenomenon held for our result, though this seems unlikely to the author.

%%% AUTHOR: optional acknowledgments here
\section*{Acknowledgments} %%  you may comment this out if no Ackno
This paper was partially written while I was visiting the California Institute of Technology as a Moore Distinguished Scholar and I am extremely grateful for their kind support. I am also much indebted to Jacob Fox, Lisa Sauermann and Yuval Wigderson for pointing out a subtle error in the first version of this paper. Finally, I would like to thank the anonymous reviewers for several helpful remarks which improved the presentation.

%%% AUTHOR:
%%% Bibliography goes here. Note that the arXiv cannot process bibtex
%%% or biber bibliographies.  Example of acceptable bibliograpy format:
\bibliographystyle{amsplain}

%% AUTHOR: You can generate such a bibliography from a .bib file by 
%% running pdflatex/bibtex/pdflatex/pdflatex and then pasting the .bbl file
%% between \begin{thebibliography} and \end{bibliography}

%%% AUTHOR: Include a short description of each author following the
%%% structure below. Use the same short tags used previously.  
%%% Use \imageat{} and \imagedot{} instead of "@" and "." in
%%% email addresses-this replaces the symbols with graphics to avoid 
%%% e-mail address harvesting from the .pdf file
\begin{aicauthors}
\begin{authorinfo}[dcon]
  David Conlon\\
  Mathematical Institute\\
  University of Oxford\\
  david.conlon\imageat{}maths\imagedot{}ox\imagedot{}ac\imagedot{}uk \\
  \url{https://people.maths.ox.ac.uk/conlond}
\end{authorinfo}
\end{aicauthors}

\end{document}